\newtheorem{theorem}{Theorem}[section] 
\newtheorem{proposition}{Proposition}[section] 
\newtheorem{lemma}{Lemma}[section]
\theoremstyle{remark}
\newtheorem{remark}{Remark}
\newtheorem{example}{Example}
\newcommand{\B}{\mathcal{B}}
\newcommand{\Id}{\mbox{Id}} 
\newcommand{\R}{\mathbb{R}}
\newcommand{\bo}{\partial\Omega} 
\renewcommand{\d}{\mathrm{d}} 
\author{Carlos Montalto }
\author{Plamen Stefanov}
\address{Department of Mathematics, Purdue University, West Lafayette, IN 47907}
\thanks{Both authors partly supported by NSF,  Grant DMS-0800428}
\title{Stability of Coupled-Physics Inverse Problems with  internal measurements}
\date{June 8, 2013}
\begin{document}

\begin{abstract}
In this paper, we develop a general approach to prove stability for the non linear second step of hybrid inverse problems.  We work with general functionals of the form $\sigma|\nabla u|^p$, $0 < p \leq 1$, where $u$ is the solution of the elliptic partial differential equation $\nabla\cdot \sigma \nabla u =0$ on a bounded domain $\Omega$ with boundary conditions $u|_{\partial \Omega} = f$. We prove stability of the linearization and H\"older conditional stability for the non-linear problem of recovering $\sigma$ from the internal measurement.
\end{abstract}

\maketitle
\section{Introduction}
Couple-physics Inverse Problems or Hybrid Inverse Problems is a research area that is interested in developing  the mathematical framework for medical imaging modalities that combine the best imaging properties of different types of waves (e.g., optical waves, electrical waves, pressure waves, magnetic waves, shear waves, etc) \cite{Ammari2008, Bal2004introduction, Bal2011hybrid, Wang2012biomedical}. In some applications of non-invasive medical imaging modalities (e.g., cancer detection) there is  need for high contrast and high resolution images. High contrast discriminates between healthy and non-healthy tissue whereas high resolution is important to detect anomalies at and early stage \cite{Bal2012hybrid}. In some situation current methodologies (e.g., electrical impedance tomography, optical tomography, ultrasound, magnetic resonance) focus only in a particular type of wave that can either recover high resolution or high contrast, but not both with the required accuracy. For instance, electrical impedance tomography (EIT) and optical tomography (OT) are high contrast modalities because they can detect small local variations in the electrical and optical properties of a tissue. However because of their high instability they are characterized by their low resolution images \cite{Borcea2002EIT, Cheney-Isaacson-Newel1999EIT}. On the other hand, ultrasound tomography and magnetic resonance imaging are modalities that provide high resolution but not necessarily high enough contrast since the difference between the index of refraction of the healthy and non-healthy tissue is very small \cite{Bal2012hybrid}. 

The aim of hybrid inverse problems is to couple the physics of each wave to benefit from the imaging advantages of each one. Some examples of this physical coupling are: (i) ultrasound modulated electrical impedance tomography (UMEIT) also known as acoustic-electro tomography (AET) or electro acoustic tomography (EAT) \cite{Ammari2008, Ammari-Bonnetier-Capdeboscq-Tanter-Fink2008, Gebauer-Scherzer2008UMEIT, Kuchment-Kuyansky2010inverse, Kuchment-Kuyansky2011UMEIT}; (ii) current density impedance imaging (CDII) \cite{Nachman-Tamasan-Timonov2011MRIEIT,Nachman-Tamasan-Timonov2009MRIEIT,Nachman-Tamasan-Timonov2007conductivity, DBLP:journals/tmi/HasanovMNJ08}; and (iii) ultrasound modulated optical tomography (UMOT) also known as acoustic optical tomography (AOT) \cite{Allmaras-Bangerth2011,Bal2012, BalMoskow2013local, BalSchotland2010inverse, Nam2002ultrasound}.

All of these hybrid inverse problems involve two steps. In the first step the high resolution modality takes an input boundary measurements $f$ and provides an output internal functional of the form $\sigma|\nabla u|^p$ for $p>0$, where $u$ is the solution of the elliptic partial differential equation $\nabla\cdot \sigma \nabla u =0$ on a bounded domain $\Omega$ with boundary conditions $u|_{\partial \Omega} = f$. Physically, $\sigma$ is the unknown conductivity (or diffusion coefficient) and $u$ is the electric potential (or photon-density) of the tissue, depending on whether we are looking for electrical (or optical) properties of the tissue. In the second step the high contrast modality recovers the conductivity (or diffusion coefficient) $\sigma$ from the knowledge of the internal functional $\sigma|\nabla u|^p$ for $p>0$. Different values of $p$ represent different physical couplings, in the case of CDII, $p$ equals 1, and in the case of UMEIT and UMOT, $p$ equals $2$. Other internal functionals have been studied as well \cite{bal2013local}.

In this paper we develop a general approach to prove stability for the non linear second step of  these hybrid inverse problems.  We work with general functionals of the form $\sigma|\nabla u|^p$, $0 < p \leq 1$. We prove stability of the linearization, and H\"older conditional stability for the non-linear problem. In the appendix, we generalize the abstract stability approach in  \cite{Stefanov20092842} to transfer conditional stability of the linearization to conditional stability of the non-linear problem.  The behavior of the linearized problem depends on whether $ 0 < p< 1$, $p =1$, or $p>1$ as has been noted before, see, e.g., \cite{Kuchment-Steinhauer2012, Bal2012hybrid}. The case $0<p<1$ is the simplest one since the linearized operator becomes elliptic and thus stable. When $p=1$, the linearized operator can be considered as one parameter family of elliptic operators on a family of hypersurfaces allowing us to show stability by superposition of elliptic operators. Finally, when $p>1$ the linearized operator becomes hyperbolic, see also \cite{Bal2012hybrid}. For completeness in the exposition we analyze the case $0<p<1$ as well even though it does not appear in applications to medical imaging.

A  unified manner of dealing with the linearization of this problem was proposed in \cite{Kuchment-Steinhauer2012}, for the cases  $0<p<1$ and $1\leq p \leq 2$. In the first case they used  one measurement, while in the second one, they required two  measurements. In both cases they prove that the linearization  is elliptic in the interior of the domain. This  implies stability of the linearized problem, up to a finite dimensional kernel, without necessarily having injectivity. The     conductivity  $\sigma$ in \cite{Kuchment-Steinhauer2012} is perturbed by functions $\delta\sigma$  identically zero in a fixed neighborhood of the boundary. We allow perturbations in the whole domain, with appropriate boundary conditions. We use one boundary measurement even in the case $p=1$ (CDII). For $0< p \leq 1$, we show stability, and hence injectivity, for the non-linear problem and its linearization. Our approach is based on a factorization of the linearization, see \eqref{eq:EllipticBoundProb} below. 
 Instead of analyzing the linearization using the pseudo-differential calculus, we analyze the only non-trivial factor in the factorization, which happens to be a second order differential operator.  

In the specific cases of $p=1$ and $p=2$, this hybrid inverse problems had been largely studied. For the case $p=1$, inversion procedures and reconstruction were obtained in \cite{Nachman-Tamasan-Timonov2011MRIEIT, Nachman-Tamasan-Timonov2009MRIEIT, Nachman-Tamasan-Timonov2007conductivity}. In the case $p=2$ with several measurements, a numerical approach was proposed in \cite{Capdeboscq-Fehrenbach-Gournay2009imaging} in $C^{1,\alpha}$ for conductivities zero near the boundary  and in \cite{Bal-Bonnetier-Monard-Faouzi2011inverse}, a global estimate was established in $W^{1,\infty}$. 

\subsection{Main results}

Let $\Omega$ be a bounded simply connected open set of $\R^n$ with smooth boundary. Consider the strictly elliptic boundary value problem 
\begin{equation}\label{eq:EllipticBoundProb}
\nabla\cdot\sigma \nabla u=0\quad \text{in } \Omega, \qquad u|_{\bo}=f,
\end{equation}
where $\sigma$ is a function in $C^2(\overline{\Omega})$  such that $\sigma >0$ in $\overline{\Omega}$ and $f \in C^{2,\alpha}(\partial \Omega)$, $0<\alpha<1$. By the Schauder estimates,   $u \in C^2(\overline{\Omega})$. We say $u$ is $\sigma-$harmonic if it satisfies equation \eqref{eq:EllipticBoundProb}. We address the question of whether we can determine $\sigma$, in a stable way, from the functional $F:C^2(\overline{\Omega}) \to C(\bar \Omega)$ defined by 
\[
F(\sigma) = \sigma|\nabla u|^p,
\]
with  $p>0$ is fixed. This problem has different behavior depending on whether $0<p<1$, $p=1$ or $p >1$.

We study stability of the non-linear problem by proving first stability for the linearization, see section~\ref{sec_2}, and then using Theorem \ref{non-linear_stability_theorem}. The latter  is a generalization of the main result in \cite{Stefanov20092842}, that allows to obtain stability for the non-linear problem from stability of the linearized problem. Our main theorem about  stability for the linearized problem is the following. 
\begin{theorem}[Stability of the linearization]\label{theor:cond_stability_linearization}
Let $u_0$ be $\sigma_0-$harmonic with $\nabla u_0\neq 0$ in $\overline{\Omega}$ and let $d_{\sigma_0}F$ be  the differential  of $F$ at $\sigma_0$. 
\begin{itemize}
\item Case $0<p<1$: there exist $C>0$ such that 
\begin{equation}\nonumber 
\|h\| \leq C\|d_{\sigma_0}F(h)\|_{H^1(\Omega)} \quad \mbox{ for   every } 
h\in H_0^1(\Omega); 
\end{equation}
\item Case $p=1$: for any $\alpha_1 \in [0,1)$, there exist $C>0$ such that if $(1-\alpha_1)s_1 \geq 2$
\begin{equation}\label{cond_stability_linearization_p=1}
\|h\| \leq C\|d_{\sigma_0}F(h)\|_{H^1(\Omega)}^{\alpha_1}\|h\|_{H^{s_1}(\Omega)}^{1-\alpha_1} \quad
\mbox{ for  every } h \in H^{s_1}(\Omega)\cap H^1_0(\Omega);
\end{equation}
\end{itemize}
where $\nu(x)$ denotes the outer-normal vector to the boundary.
\end{theorem}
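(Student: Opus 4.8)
The plan is to reduce stability of the linearized map $d_{\sigma_0}F$ to an a priori estimate for a single second-order operator, and then to analyze that operator according to whether it is elliptic or degenerate. First I would linearize: writing $\sigma=\sigma_0+h$, $u=u_0+v$, equation \eqref{eq:EllipticBoundProb} gives, to first order, that $v$ solves
\begin{equation}\nonumber
\nabla\cdot(\sigma_0\nabla v)=-\nabla\cdot(h\nabla u_0)\quad\text{in }\Omega,\qquad v|_{\bo}=0
\end{equation}
(the Dirichlet datum $f$ being fixed), while differentiating $\sigma|\nabla u|^p$ yields
\begin{equation}\nonumber
d_{\sigma_0}F(h)=|\nabla u_0|^p\Big(h+p\,\sigma_0|\nabla u_0|^{-2}\,\nabla u_0\cdot\nabla v\Big).
\end{equation}
Since $\nabla u_0\neq0$ on $\overline{\Omega}$, the scalar factor $|\nabla u_0|^p$ is bounded and bounded away from zero, so it is the ``trivial'' factor. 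Setting $\tilde g:=|\nabla u_0|^{-p}\,d_{\sigma_0}F(h)$ and substituting $h=\tilde g-p\sigma_0|\nabla u_0|^{-2}\nabla u_0\cdot\nabla v$ into the equation for $v$ eliminates $h$ and exposes the genuine (second-order) factor: $v$ solves
\begin{equation}\label{plan:Pv}
Pv:=\nabla\cdot(\sigma_0\nabla v)-p\,\nabla\cdot\!\big(\sigma_0|\nabla u_0|^{-2}(\nabla u_0\cdot\nabla v)\,\nabla u_0\big)=-\nabla\cdot(\tilde g\,\nabla u_0),\qquad v|_{\bo}=0.
\end{equation}
Once $v$ is controlled by $\tilde g$, the recovery formula $h=\tilde g-p\sigma_0|\nabla u_0|^{-2}\nabla u_0\cdot\nabla v$ bounds $\|h\|$ by $\|d_{\sigma_0}F(h)\|$ in the relevant norm.

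The next step is to read off the behaviour of $P$ from its principal symbol $-\sigma_0|\xi|^2\big(1-p\cos^2\theta\big)$, where $\theta$ is the angle between $\xi$ and $\nabla u_0$. For $0<p<1$ this is bounded above by $-(1-p)\sigma_0|\xi|^2$, so $-P$ is uniformly elliptic; moreover its Dirichlet form equals $\int_\Omega\sigma_0|\nabla v|^2(1-p\cos^2\theta_v)\,\d x\ge(1-p)\int_\Omega\sigma_0|\nabla v|^2\,\d x$, which by Poincaré is coercive on $H_0^1(\Omega)$. Hence \eqref{plan:Pv} is well posed with trivial kernel, and the standard energy and elliptic regularity estimates give $\|v\|_{H^1}\lesssim\|\tilde g\|_{L^2}$ and $\|v\|_{H^2}\lesssim\|\tilde g\|_{H^1}$; feeding this into the recovery formula yields $\|h\|\le C\|d_{\sigma_0}F(h)\|_{H^1(\Omega)}$, the first assertion. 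The lower-order terms produced by $\sigma_0\in C^2(\overline\Omega)$ and the variable coefficient $|\nabla u_0|^{-2}$ are harmless here because coercivity is strict.

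The case $p=1$ is the crux, since the principal symbol collapses to $-\sigma_0|\xi_\perp|^2$, the component of $\xi$ tangent to the level sets $\Sigma_c=\{u_0=c\}$: the operator $P$ loses all second-order control in the $\nabla u_0$ direction and retains ellipticity only along the foliation $\{\Sigma_c\}$. I would introduce coordinates adapted to $u_0$, using $u_0$ itself as the transversal variable (legitimate since $\nabla u_0\neq0$), so that $P$ becomes, leaf by leaf, a one-parameter family of uniformly elliptic operators on the hypersurfaces $\Sigma_c$, coupled only through lower-order terms. Solving these tangential problems produces an a priori estimate that controls the components of $h$ transverse to $\nabla u_0$ but leaves the component along $\nabla u_0$ uncontrolled by the data alone. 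Interpolating this degenerate estimate against the a priori smoothness bound $\|h\|_{H^{s_1}}$ — the condition $(1-\alpha_1)s_1\ge2$ being exactly what lets the interpolation against the second-order degenerate estimate close — converts it into the H\"older conditional bound \eqref{cond_stability_linearization_p=1}.

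I expect the main obstacle to be the third step: rigorously realizing the ``superposition of elliptic operators on a family of hypersurfaces,'' quantifying the loss in the degenerate normal direction, and assembling the leafwise estimates with the interpolation into a single clean inequality with uniform constants. In particular one must control the lower-order terms coming from the geometry of the foliation and from $\sigma_0\in C^2(\overline\Omega)$, and handle the boundary contributions — where $v|_{\bo}=0$ but $\partial_\nu v$ is not controlled and where the interplay between $\nabla u_0$ and the outer normal $\nu(x)$ enters — without degrading the interpolation exponents.
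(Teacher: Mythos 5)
Your factorization is the same as the paper's: your operator $P$ is exactly $-L$ from Proposition \ref{Prop:LinearDecomp}, and your symbol analysis (uniformly elliptic for $0<p<1$, elliptic only along the level sets of $u_0$ for $p=1$) is the paper's Section 2. Within that common framework you close the argument differently, and for $0<p<1$ your route is complete and in fact slightly cleaner and stronger than the paper's: the paper recovers $h$ through the transport estimate $\|h\|\le C\|\nabla u_0\cdot\nabla h\|$ (Lemma \ref{lemma:estimate_T0}, proved by integrating along the gradient flow of $u_0$), which forces it to control $\|v_0\|_{H^2}$ and hence to invoke elliptic regularity for $L$; you instead recover $h$ pointwise from $\tilde g$ and $\nabla u_0\cdot\nabla v$, so the coercivity $(Lv,v)\ge(1-p)\int_\Omega\sigma_0|\nabla v|^2$ coming from \eqref{eq:L_form}, together with the energy identity $(Lv,v)=(\tilde g\,\nabla u_0,\nabla v)$ and Poincar\'e, already gives $\|\nabla v\|\le C\|\tilde g\|_{L^2}$ and hence the (stronger) bound $\|h\|\le C\|d_{\sigma_0}F(h)\|_{L^2(\Omega)}$. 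That case is correct.

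The gap is in the case $p=1$, which you yourself call the crux: your third step is a plan, not a proof, and the piece you defer as ``the main obstacle'' is precisely the content of the paper's Lemma \ref{lemma:stability_for_L} (case $p=1$), namely the uniform estimate $\|v\|^2_{L^2(\Omega)}\le C(Lv,v)$ for $v\in H^1_0(\Omega)$. The paper proves it concretely by: (i) extending $u_0$ to $u_1$ on a larger open set $\Omega_1\Supset\overline\Omega$ with $\nabla u_1\neq 0$ on $\overline\Omega_1$; (ii) extending $v$ by zero outside $\Omega$, which is legitimate because $v\in H^1_0(\Omega)$; (iii) applying the Poincar\'e inequality leaf by leaf on the extended level sets in the adapted coordinates \eqref{LocalCoordLevelCurve}, using that at $p=1$ the operator $L=Q(y^n)$ contains no $y^n$-derivatives at all; and (iv) covering $\overline\Omega$ by finitely many disjoint slabs of leaves to obtain one uniform constant. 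Note that the zero-extension device is exactly what dissolves the boundary difficulties you flag at the end: level sets of $u_0$ can indeed meet $\partial\Omega$ non-transversally, but after extension $v$ is compactly supported inside each extended leaf, so no boundary terms, traces, or $\partial_\nu v$ issues ever appear. Beyond this missing lemma, two smaller steps needed to reach the stated product form \eqref{cond_stability_linearization_p=1} are also unaddressed: your recovery formula produces an additive term $C\|\tilde g\|_{L^2}$, which must be absorbed into the product on the right (use the continuity bound $\|d_{\sigma_0}F(h)\|_{L^2}\le C\|h\|_{L^2}$, which follows from \eqref{dF} and the energy estimate for $v$, to rewrite it as $\|d_{\sigma_0}F(h)\|^{\alpha_1}\|h\|_{H^{s_1}}^{1-\alpha_1}$ up to constants); and the interpolation you perform on $v$ must be traded for the a priori norm of $h$ via an elliptic regularity bound of the type $\|v\|_{H^{s}}\le C\|h\|_{H^{s-1}}$. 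These are routine, but the leafwise estimate with uniform constants is the real content of the theorem for $p=1$, and your proposal stops exactly where it begins.
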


This together with Theorem \ref{non-linear_stability_theorem} gives our main result about stability for the non-linear problem. 
\begin{theorem}[Stability for the non-linear map $F$, case $0<p\leq 1$]\label{theor:cond_stability_main_theorem}
Let $0<p\le1$. 
Let $u_0$ be $\sigma_0-$harmonic with $\nabla u_0\neq 0$ in $\overline{\Omega}$. For any $0< \theta < 1$, there exist $s >0$ so that if $\| \sigma \|_{H^{s}(\Omega)} <L$ for some $L>0$, there exist $\epsilon >0$ such that
\[
\| \sigma - \sigma_0 \|_{C^2(\bar \Omega)} < \epsilon
\]
implies
\begin{equation}\label{ineq:main_theorem_non-linear_stability}
\| \sigma - \sigma_0 \|_{L^2(\Omega)} < C\| F(\sigma) - F(\sigma_0) \|^\theta_{L^2(\Omega)}.
\end{equation}
\end{theorem}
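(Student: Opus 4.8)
The plan is to derive Theorem~\ref{theor:cond_stability_main_theorem} by feeding the linearized stability estimates of Theorem~\ref{theor:cond_stability_linearization} into the abstract transfer result Theorem~\ref{non-linear_stability_theorem}. The latter is built precisely for this purpose: it converts a (conditional, H\"older) stability estimate for the differential $d_{\sigma_0}F$ into a (conditional, H\"older) stability estimate for the nonlinear map $F$, provided $F$ is smooth enough near $\sigma_0$ and its second-order Taylor remainder is quadratically controlled. Thus the two things I must verify are, first, the regularity and remainder bounds that are the hypotheses of Theorem~\ref{non-linear_stability_theorem}, and second, that the linearized estimates already proved are of the form that theorem accepts.

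First I would set up the functional-analytic framework and check smoothness of the forward map. The solution map $\sigma\mapsto u$ of \eqref{eq:EllipticBoundProb} is smooth between suitable Sobolev (or H\"older) spaces by the Schauder estimates together with the implicit function theorem: differentiating the equation in the direction $h$ gives $\nabla\cdot(\sigma\nabla u')=-\nabla\cdot(h\nabla u)$ with $u'|_{\bo}=0$, a well-posed homogeneous Dirichlet problem, so $h\mapsto u'$ is bounded and $\sigma\mapsto u$ is $C^2$ with locally uniformly bounded derivatives. Since $F(\sigma)=\sigma|\nabla u|^p$ and the hypothesis $\nabla u_0\neq0$ in $\overline{\Omega}$ guarantees, by continuity and the condition $\|\sigma-\sigma_0\|_{C^2(\bar\Omega)}<\epsilon$, that $\nabla u\neq0$ as well, the pointwise nonlinearity $(\sigma,\nabla u)\mapsto\sigma|\nabla u|^p$ is smooth on the relevant range and stays away from the singular set $\nabla u=0$ where $|\nabla u|^p$ would fail to be differentiable. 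Composing, $F$ is $C^2$ near $\sigma_0$ and its remainder $F(\sigma)-F(\sigma_0)-d_{\sigma_0}F(\sigma-\sigma_0)$ is bounded by $C\|\sigma-\sigma_0\|^2$ in the norms dictated by Theorem~\ref{non-linear_stability_theorem}; this is where the a priori bound $\|\sigma\|_{H^s(\Omega)}<L$ enters, to keep the second derivative uniformly bounded.

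With the hypotheses in place I would invoke Theorem~\ref{non-linear_stability_theorem}. In the case $0<p<1$ the linearized estimate is the Lipschitz bound $\|h\|\le C\|d_{\sigma_0}F(h)\|_{H^1(\Omega)}$, and the transfer theorem turns this into a nonlinear stability estimate; a final interpolation on the data side (passing from the $H^1$ norm appearing in Theorem~\ref{theor:cond_stability_linearization} to the $L^2$ norm appearing in \eqref{ineq:main_theorem_non-linear_stability}) yields \eqref{ineq:main_theorem_non-linear_stability} with $\theta$ arbitrarily close to $1$. In the case $p=1$ the linearized estimate is the interpolation inequality \eqref{cond_stability_linearization_p=1} with exponent $\alpha_1$; given the target $\theta\in(0,1)$ I would choose $\alpha_1$ close enough to $1$, equivalently $s_1$ (and hence the smoothness index $s$) large enough that $(1-\alpha_1)s_1\ge2$, so that the transfer theorem delivers the H\"older exponent $\theta$. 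The data-side interpolation is the same routine step: bound $\|F(\sigma)-F(\sigma_0)\|_{H^1(\Omega)}$ by $\|F(\sigma)-F(\sigma_0)\|_{L^2(\Omega)}^{1-\beta}\|F(\sigma)-F(\sigma_0)\|_{H^{s'}(\Omega)}^{\beta}$ and control the high-order factor by the a priori bound $L$, again at the cost of lowering the exponent, which is harmless since any $\theta<1$ is admissible.

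The main obstacle I anticipate is the remainder estimate feeding Theorem~\ref{non-linear_stability_theorem}: one must track the nonlinearity through the solution operator and show the quadratic remainder is controlled in norms compatible both with the linearized estimate, which asks for $H^1$ on the data side, and with the a priori class, which supplies high-Sobolev control of $\sigma$. The bookkeeping of which norm sits where, and verifying that the repeated interpolations can be arranged so that the final H\"older exponent is any prescribed $\theta<1$, is the delicate part; the PDE and geometric input ($\nabla u_0\neq0$ and smoothness of the forward map) is comparatively standard once the abstract machinery of the appendix is in place.
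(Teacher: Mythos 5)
Your proposal is correct and follows the same route as the paper: verify the hypotheses of the abstract transfer result (Theorem~\ref{non-linear_stability_theorem}) --- the quadratic remainder bound \eqref{second_order_residue}, the linearized stability of Theorem~\ref{theor:cond_stability_linearization}, the interpolation inequalities, and continuity of $\d F_{\sigma_0}$ --- and then apply it with $\B_1=C^2(\overline\Omega)$, $\B_1'=L^2(\Omega)$, $\B_1''=H^{s_1}(\Omega)$, $\B_2=H^1(\Omega)$. Two differences in execution are worth recording. First, your parameter strategy is the mirror image of the paper's: you push $\alpha_1\to1$ (hence $s_1\to\infty$), which makes $\mu=\alpha_1\mu_1\mu_2$ close to $1$, so the exponent $\beta=\mu/(1-\mu_3(1-\mu))$ of Theorem~\ref{non-linear_stability_theorem} exceeds $\theta$ even with $\mu_3=0$ (i.e.\ $\B_1'''=\B_1''$, $s=s_1$); the paper instead takes $\alpha_1=\mu_1$ small, keeps $\mu<\min\{1/2,\beta\}$, and pushes $\mu_3\to1$ (i.e.\ $s\to\infty$ with $s_1$ fixed). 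Both selections are compatible with the constraints $(1-\alpha_1)s_1\ge2$, $(1-\mu_1)s_1>n/2+2$ and $\mu_3\ge(1-2\mu)/(1-\mu)$, so this is a harmless variation. Second, and more substantively, your final data-side interpolation --- bounding $\|F(\sigma)-F(\sigma_0)\|_{H^1}$ by $\|F(\sigma)-F(\sigma_0)\|_{L^2}^{1-\beta'}\,\|F(\sigma)-F(\sigma_0)\|_{H^{s'}}^{\beta'}$ with the high norm controlled through the a priori bound --- is a step the paper's own proof does not carry out: the paper stops at $\|\sigma-\sigma_0\|_{C^2(\overline\Omega)}\le C\|F(\sigma)-F(\sigma_0)\|_{H^1(\Omega)}^{\theta}$, and since $\|\cdot\|_{L^2}\le\|\cdot\|_{H^1}$ this does not by itself yield the stated estimate \eqref{ineq:main_theorem_non-linear_stability}, which has the $L^2$ norm of the data on the right. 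So your extra interpolation is not redundant bookkeeping; it is exactly what is needed to match the statement as written, at the familiar cost of lowering the H\"older exponent, which is harmless because $\theta<1$ is arbitrary.
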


\begin{remark}
In the case of $\R^2$ we can satisfy $\nabla u_0\not=0$ in $\overline{\Omega}$ by imposing conditions on $f$. For instance in \cite{Alessandrini1985} and \cite{Nachman-Tamasan-Timonov-2007} the authors showed if  $\Omega$ is simply connected in $\R^2$, $\sigma_0 \in C^\alpha(\Omega)$ $0< \alpha <1$ and $u_0|_{\bo}$ is continuous and two-to-one map, except possibly at its maximum and minimum. Then $|\nabla u|>0$ in $\overline{\Omega}$. 
\end{remark}

\textbf{Acknowledments.} The authors would like to thank Adrian Nachman for his advice. This work started when the second author was visiting the Fields Institute in Toronto.  

\section{Linearization}\label{sec_2} 
We start by considering the linearized version of this problem. Denote by $\d F_{\sigma_0}$ the G\^ateux derivative of $F$ at some fixed $\sigma_0$. For $\sigma$ in a $C^2$-neighborhood of $\sigma_0$ we get 
\begin{equation} \label{2-order_linerization}
F(\sigma) = F(\sigma_0) + \d F_{\sigma_0}(\sigma - \sigma_0) + \int_0^1 (1-t)\d^2F_{\sigma_0 + t(\sigma - \sigma_0)}(\sigma - \sigma_0,\sigma - \sigma_0) \d t
\end{equation} 
where $\d F_{\sigma_0}$ is given by 
\begin{equation}\label{dF}
\d F_{\sigma_0}(h) =   h|\nabla u_0|^p  + p |\nabla u_0|^{p-2} \sigma_0 \nabla u_0\cdot \nabla v_0(h)
\end{equation} 
and $\d^2 F_{\sigma_t }$ by 
\begin{equation}\label{d2F}
\begin{split}
\d^2 F_{\sigma_t}(h,h) 
& = p|\nabla u_t|^{p-2} \left( h \nabla u_t   \cdot \nabla v_t(h) + \nabla v_t(h) \cdot \nabla v_t(h) +  \nabla u_t   \cdot \nabla w_t(h)\right)  \\
& \qquad \qquad \qquad \qquad \qquad + p(p-2)|\nabla u_t |^{p-4} (\nabla u_t   \cdot \nabla v_t(h))^2,
\end{split}
\end{equation} 
for $h = \sigma - \sigma_0 \in C^2(\overline{\Omega})$ and $\sigma_t  = \sigma_0 + t(\sigma -\sigma_0) $ for $0 \leq t \leq 1$ and $u_t$, $v_t$ and $w_t$ solving
\begin{equation}\label{u_v_w}
\begin{aligned}
\nabla\cdot {\sigma_t}\nabla u_t & = 0 \\ 
\nabla\cdot {\sigma_t}\nabla v_t   &=  - \nabla\cdot h\nabla u_t \\
\nabla\cdot {\sigma_t}\nabla w_t   &= - 2 \nabla\cdot h\nabla v_t
\end{aligned}
\quad
\begin{aligned}
\mbox{in }\Omega,\\
\mbox{in }\Omega,\\
\mbox{in }\Omega,
\end{aligned}
\quad
\begin{aligned}
u_t|_{\bo}&=f; \\
v_t|_{\bo}&=0; \\
w_t|_{\bo}&=0; 
\end{aligned}
\end{equation}
for $0 \leq t \leq 1$. 

Let 
\[
R_{\sigma_0}(h) = \int_0^1 (1-t)\d^2F_{\sigma_0 + th}(h,h) \d t \quad \forall h \in C^2(\overline{\Omega}),
\]
we claim that 
\begin{equation} \label{second_order_residue}
\|R_{\sigma_0}(h)\| \leq C_{\sigma_0} \| h\|^2_{C^2(\Omega)}
\end{equation}
where 
\[
C_{\sigma_0} = C\sup_{0\leq t\leq 1} \left((2p+1) \|\nabla u_t \|_{C^2(\overline{\Omega})}^p + p(p-2)\| \nabla u_t \|^{2p-2}_{C^2(\overline{\Omega})}\right)
\]
 with $C$ depending only on $\Omega$ and the dimension $n$. Assuming the claim then  $dF_{\sigma_0}$ is a linearization of $F$ at $\sigma_0$ with a quadratic remainder as in (\ref{alpha_order_linearization}). 

To show (\ref{second_order_residue}) we estimate (\ref{d2F}) using inequalities \eqref{ineq:vt_estimate} and \eqref{ineq:wt_estimate}. These last two inequalities are consequence of \eqref{u_v_w} and elliptic regularity \cite{GT-2001}. Let $C>0$ be a constant depending on $\Omega$ and the dimension $n$, using the convention that $C$ can increase from step to step we have 
\begin{equation} \label{ineq:vt_estimate}
\begin{split}
\|\nabla v_t\|_{C^{1,\alpha}(\overline{\Omega})} & \leq \|v_t\|_{C^{2,\alpha}(\overline{\Omega})} \quad \mbox{ for } \quad \alpha \in (0,1) \\
&\leq C \|\nabla \cdot h \nabla u_t\|_{C^{0,\alpha}(\overline{\Omega})} \leq C \| h \nabla u_t\|_{C^{1,\alpha}(\overline{\Omega})}  \quad \mbox{ for } \quad \alpha \in (0,1) \\
& \leq C \|h\|_{C^2(\overline{\Omega})} \cdot \|\nabla u_t\|_{C^2(\overline{\Omega})},
\end{split}
\end{equation}
and
\begin{equation}\label{ineq:wt_estimate}
\begin{split}
\|\nabla w_t\|   &\leq C\|\nabla w_t\|_{C^{1,\alpha}(\overline{\Omega})} \leq \|w_t\|_{C^{2,\alpha}(\overline{\Omega})} \quad \mbox{ for } \quad \alpha \in (0,1) \\
&\leq C \|\nabla \cdot h \nabla v_t\|_{C^{0,\alpha}(\overline{\Omega})} \leq C \| h \nabla v_t\|_{C^{1,\alpha}(\overline{\Omega})}  \quad \mbox{ for } \quad \alpha \in (0,1) \\
& \leq C \|h\|^2_{C^2(\overline{\Omega})} \cdot \|\nabla u_t\|_{C^2(\overline{\Omega})},
\end{split}
\end{equation}
where the last inequality follows by \eqref{ineq:vt_estimate}.

\subsection*{Decomposition of the Linearization}
We decompose the linearization (\ref{2-order_linerization}) and describe the geometry of $dF_{\sigma_0}$ in more detail in the following two propositions. This analysis holds for any $p>0$.

\begin{proposition}\label{Prop:LinearDecomp}
Let $u_0$ be $\sigma_0$-harmonic with $\nabla u_0 \neq 0$ in $\overline{\Omega}$,  then
\begin{equation} \label{LinearDecomp}
\sigma_0 T_0\frac{\d F_{\sigma_0}(\rho)}{\sigma_0|\nabla u_0|^p } = -L \Delta_{\sigma_0,D}^{-1}T_0\rho \quad \mbox{ for } \quad \rho = (\sigma - \sigma_0)/\sigma_0 \in C^2(\overline{\Omega}),
\end{equation}
where $T_0 = \nabla u_0 \cdot\nabla$ is a transport operator along the gradient field of $u_0$, $\Delta_{\sigma,D}$ is the Dirichlet realization of $\Delta_\sigma:= \nabla\cdot\sigma\nabla$ in $\Omega$ and $L$ is a differential operator given by
\[
Lv := -\nabla\cdot {\sigma_0} \nabla v + p\nabla\cdot\left( {\sigma_0} \frac{\nabla u_0\cdot \nabla v }{|\nabla u_0|^2}   \nabla u_0 \right).
\]
\end{proposition}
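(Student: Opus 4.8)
The plan is to prove \eqref{LinearDecomp} as a pointwise identity between two differential expressions in $\rho$, obtained by reducing each side separately and then matching, with the $\sigma_0$--harmonicity of $u_0$ carrying essentially all of the work. First I would simplify the left--hand side. Dividing \eqref{dF} (with $h=\rho$) by $\sigma_0|\nabla u_0|^p$ gives
\[
\frac{\d F_{\sigma_0}(\rho)}{\sigma_0|\nabla u_0|^p}=\frac{\rho}{\sigma_0}+p\,\frac{\nabla u_0\cdot\nabla v_0}{|\nabla u_0|^2},
\]
where, by the second line of \eqref{u_v_w} with $h=\rho$, the function $v_0$ solves $\Delta_{\sigma_0}v_0=-\nabla\cdot(\rho\nabla u_0)$ with $v_0|_{\bo}=0$. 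Reading the source $\nabla\cdot(\rho\nabla u_0)$ as the transport $T_0\rho$ of $\rho$ along the field $\nabla u_0$ in conservation form, inversion gives the clean identification $v_0=-\Delta_{\sigma_0,D}^{-1}T_0\rho=:-\phi$, which is exactly the inner factor appearing on the right of \eqref{LinearDecomp}.

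Next I would record the one structural identity that drives everything: for any scalar $q$,
\[
\nabla\cdot\!\big(q\,\sigma_0\nabla u_0\big)=\sigma_0\,\nabla u_0\cdot\nabla q=\sigma_0 T_0 q,
\]
valid precisely because $\nabla\cdot(\sigma_0\nabla u_0)=0$, i.e.\ $u_0$ is $\sigma_0$--harmonic. I would apply this twice. With $q=\rho/\sigma_0$ it collapses the first term on the left, $\sigma_0 T_0(\rho/\sigma_0)=\nabla\cdot(\rho\nabla u_0)$. With $q=(\nabla u_0\cdot\nabla\phi)/|\nabla u_0|^2$ it rewrites the correction in $L$: writing $Lv=-\nabla\cdot(\sigma_0 A\nabla v)$ with $A=\Id-p\,(\nabla u_0\otimes\nabla u_0)/|\nabla u_0|^2$, one gets
\[
-L\phi=\Delta_{\sigma_0}\phi-p\,\nabla\cdot\!\Big(\sigma_0\frac{\nabla u_0\cdot\nabla\phi}{|\nabla u_0|^2}\nabla u_0\Big)=\Delta_{\sigma_0}\phi-p\,\sigma_0 T_0\!\Big(\frac{\nabla u_0\cdot\nabla\phi}{|\nabla u_0|^2}\Big).
\]

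It then remains only to collect terms. Using $\phi=-v_0$ one has $\Delta_{\sigma_0}\phi=-\Delta_{\sigma_0}v_0=\nabla\cdot(\rho\nabla u_0)$ and $\nabla u_0\cdot\nabla\phi=-\nabla u_0\cdot\nabla v_0$, so the right--hand side becomes $\nabla\cdot(\rho\nabla u_0)+p\,\sigma_0 T_0\big((\nabla u_0\cdot\nabla v_0)/|\nabla u_0|^2\big)$. The left--hand side, after applying $\sigma_0 T_0$ to the displayed quotient and using the collapse identity on its first term, is the same expression. This establishes \eqref{LinearDecomp}.

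The reduction makes the matching almost mechanical once the collapse identity and the identification $v_0=-\phi$ are in hand: the two $\Delta_{\sigma_0}\phi$--type terms coincide and the two transport terms coincide. The only place demanding care is the lower--order bookkeeping---terms carrying a derivative of $\sigma_0$, a factor $\Delta u_0$, or a derivative of the projector $\nabla u_0\otimes\nabla u_0/|\nabla u_0|^2$---and the single fact forcing all of these to cancel is $\nabla\cdot(\sigma_0\nabla u_0)=0$. I would therefore invoke $\sigma_0$--harmonicity in exactly the two transparent places above and keep $\nabla u_0\neq0$ on $\overline{\Omega}$ in force throughout, which is what guarantees that $A$ and the quotient $(\nabla u_0\cdot\nabla v)/|\nabla u_0|^2$ are smooth and that every manipulation is legitimate.
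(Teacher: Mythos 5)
Your overall strategy is essentially the paper's own factorization argument: the paper solves \eqref{dF_rho} for the free $\rho$ term, substitutes it into the equation for $v_0$ to get $Lv_0=\nabla\cdot\bigl(|\nabla u_0|^{-p}\,\d F_{\sigma_0}(\rho)\nabla u_0\bigr)$, and then uses the same harmonicity collapse $\nabla\cdot(q\,\sigma_0\nabla u_0)=\sigma_0T_0q$ that you call the structural identity; you simply run the computation in the opposite direction. However, there is a genuine gap at your key identification step. The claim that the source $\nabla\cdot(\rho\nabla u_0)$ ``is'' the transport $T_0\rho$ in conservation form is false: expanding gives $\nabla\cdot(\rho\nabla u_0)=\nabla u_0\cdot\nabla\rho+\rho\,\Delta u_0=T_0\rho+\rho\,\Delta u_0$, and $\Delta u_0$ does not vanish, because $u_0$ is only $\sigma_0$-harmonic, so $\Delta u_0=-\sigma_0^{-1}\nabla\sigma_0\cdot\nabla u_0\neq 0$ whenever $\sigma_0$ is non-constant. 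Your own structural identity shows what is actually true: $\nabla\cdot(\rho\nabla u_0)=\sigma_0T_0(\rho/\sigma_0)$, so under your reading $v_0=-\Delta_{\sigma_0,D}^{-1}\bigl(\sigma_0T_0(\rho/\sigma_0)\bigr)$, not $-\Delta_{\sigma_0,D}^{-1}T_0\rho$. Since your final matching of the two sides is entirely conditional on the identification $\phi=-v_0$, the proof collapses at this point; what you have actually proved is an identity with $\sigma_0T_0(\rho/\sigma_0)$ in the inner slot, which is not \eqref{LinearDecomp}.

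The root cause is an earlier slip: you substituted $h=\rho$ into \eqref{dF} and \eqref{u_v_w}, whereas the perturbation associated with $\rho=(\sigma-\sigma_0)/\sigma_0$ is $h=\sigma-\sigma_0=\rho\,\sigma_0$; that is how the paper's \eqref{dF_rho} is obtained (its zeroth-order term is $\rho$, not $\rho/\sigma_0$). With the correct substitution the source in the $v_0$-equation is $\nabla\cdot(\rho\,\sigma_0\nabla u_0)$, which collapses \emph{exactly}, with no error term, to $\sigma_0T_0\rho$ -- this is precisely the reason the statement is phrased in terms of the relative perturbation $\rho$ rather than $h$. Your computation then goes through verbatim and yields $\sigma_0T_0\bigl(\d F_{\sigma_0}(\rho)/(\sigma_0|\nabla u_0|^p)\bigr)=Lv_0$ with $v_0=-\Delta_{\sigma_0,D}^{-1}(\sigma_0T_0\rho)$. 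Note that even then a factor $\sigma_0$ remains inside the inverse; the paper's own proof silently drops it when it writes $\nabla\cdot((\sigma-\sigma_0)\nabla u_0)=\nabla\rho\cdot\nabla u_0$, so \eqref{LinearDecomp} as printed carries the same discrepancy. That factor is harmless (it is a fixed smooth function bounded away from $0$ and $\infty$, so all the stability estimates survive), but it is a benign renormalization of a correct identity -- unlike the identity $\nabla\cdot(\rho\nabla u_0)=T_0\rho$ on which your argument rests, which is simply wrong for $\sigma_0$-harmonic $u_0$.
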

\begin{proof}
Since $\nabla u_0 \neq 0$ in $\overline{\Omega}$ we can write (\ref{dF}) as
\begin{equation}\label{dF_rho}
\d F_{\sigma_0}(\rho) =   {\sigma_0}|\nabla u_0|^p \left( \rho + p\frac{\nabla u_0\cdot \nabla v_0(\rho)}{|\nabla u_0|^2} 	\right).
\end{equation} 
Solving (\ref{dF_rho}) for the free $\rho$ term and plugging that into the second equation in  (\ref{u_v_w}) we get
\[
Lv_0  = 
 \nabla\cdot\left(  \frac{\d F_{\sigma_0}(\rho)}{|\nabla u_0|^p } \nabla u_0\right) 
\quad \text{in $\Omega$}, \qquad v_0|_{\bo}=0.
\]
The solution $v_0$ of the second equation in (\ref{u_v_w}) satisfies 
\[
\nabla \cdot \sigma_0 \nabla v_0 = -\nabla \cdot (\sigma - \sigma_0) \nabla u_0 = -\nabla \rho \cdot\nabla u_0 
\] 
and is a linear operator in $\rho$ that can be written as $v_0 = -\Delta^{-1}_{\sigma_0,D}T_0\rho $. So we get
\[ 
-L\Delta^{-1}_{\sigma_0,D}T_0\rho  = \nabla\cdot\left(  \frac{\d F_{\sigma_0}(\rho)}{|\nabla u_0|^p } \nabla u_0\right)=  \sigma_0\nabla u_0 \cdot\nabla\left(  \frac{\d F_{\sigma_0}(\rho)}{\sigma_0|\nabla u_0|^p }\right).
\]
\end{proof}

Notice that in the l.h.s.\ of \eqref{LinearDecomp}, the only non-trivial operator in terms of injectivity is the second order differential operator $L$. We focus our attention on understanding this operator. Denote by $\Pi_0 \omega = (\nabla u_0\cdot \omega/| \nabla u_0 |^2)\nabla u_0   $ the  orthogonal projection of the covector $\omega$  onto $\nabla u_0$ in the Euclidean metric. 
Then $\Pi_\perp:=\Id-\Pi_0$ is the orthogonal projection on the orthogonal complement of $\nabla u_0$.
Take a test function $\phi\in C_0^\infty(\Omega)$, and compute
\begin{equation} \label{eq:L_form}
\begin{split}
(Lv,\phi)& =(\sigma_0\nabla v,\nabla \phi)-  p (\sigma_0\Pi_0\nabla v,\nabla \phi ),\\ 
&= \left( \sigma_0\Pi_\perp\nabla  v,\Pi_\perp\nabla  \phi \right)+(1-p)\left( \sigma_0\Pi_0\nabla  v,\Pi_0\nabla  \phi \right)  .
\end{split}
\end{equation}
We therefore get
\begin{equation*}
L = (\Pi_\perp\nabla)'\cdot\sigma_0(\Pi_\perp\nabla) + (1-p)(\Pi_0\nabla)'\cdot\sigma_0(\Pi_0\nabla),
\end{equation*}
where the prime stands for transpose in distribution sense. 

\begin{example} $\sigma_0=1$, $f=x^n$. Then $u_0=x^n$ and $-L=\Delta_{x'}+(1-p)\partial_{x^n}^2$, where $x=(x',x^n)$. Notice that for $ 0 \leq p <1, L$ is an elliptic operator; for $p=1, L$  becomes the restriction of the Laplacian on the planes $x^n=\text{const.}$; and for $p>1, L$ is a hyperbolic operator.  
\end{example}

Motivated by this example we find a local representation for $L$. We use the convention that Greek superscripts and subscripts run from $1$ to $n-1$. 
\begin{proposition}
Let $u_0\in C^2(\overline{\Omega})$ be $\sigma_0$-harmonic, with $\nabla u_0(x_0) \neq 0$ for $x_0 \in \Omega$. There exist local coordinates $(y',y^n)$ near $x_0$ such that 
\begin{equation} \label{LocalCoordLevelCurve}
dx^2 =    c^2(\d y^n)^2 +  g_{\alpha\beta}\d y^\alpha \d y^\beta, 
\quad g_{\alpha\beta} : = \sum_i \frac{\partial x^i}{\partial y^\alpha}  \frac{\partial x^i}{\partial y^\beta} ,
\end{equation}
where $c= |\nabla u_0|^{-1}$. In this coordinates
\begin{equation}\label{local_representation_L_not_explicit}
 L =  Q   -(1-p)\frac{1}{\sqrt{\det g}} \frac{\partial}{\partial y^n}c^{-2}\sigma_0\sqrt{\det g} \frac{\partial}{\partial y^n},
\end{equation}
where $Q$ is a second order elliptic positively defined differential operator in the variables $y'$ smoothly dependent on $y^n$; in fact, $Q$ is the restriction of $\Delta_{\sigma_0}$ on the level surfaces $u_0=\text{const}$.
\end{proposition}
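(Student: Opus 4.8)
The plan is to introduce coordinates adapted to the foliation of a neighborhood of $x_0$ by the level sets of $u_0$, in which the coordinate field $\partial_{y^n}$ is parallel to $\nabla u_0$, and then to read the coordinate form of $L$ straight off the intrinsic expression for $L$ obtained just after \eqref{eq:L_form}. To build the coordinates, observe that $\nabla u_0(x_0)\neq 0$, so the sets $\{u_0=\mathrm{const}\}$ foliate a neighborhood of $x_0$ by smooth hypersurfaces. Let $X:=\nabla u_0/|\nabla u_0|^2$ and let $\psi_s$ be its Euclidean flow; since $X(u_0)=\nabla u_0\cdot X=1$, the flow raises $u_0$ at unit speed and carries level surfaces to level surfaces. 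I would fix any smooth chart $y'=(y^1,\dots,y^{n-1})$ on $\Sigma:=\{u_0=u_0(x_0)\}$ near $x_0$ and define the map $(y',y^n)\mapsto \psi_{\,y^n-u_0(x_0)}(\iota(y'))$, where $\iota(y')\in\Sigma$. Then $y^n=u_0$, the slices $\{y^n=\mathrm{const}\}$ are exactly the level surfaces, the fields $\partial_{y^\alpha}$ are tangent to them, and $\partial_{y^n}=X$ is parallel to $\nabla u_0$, hence Euclidean-orthogonal to every $\partial_{y^\alpha}$. Therefore the cross terms vanish, $g_{\alpha n}=0$, while $g_{nn}=|X|^2=|\nabla u_0|^{-2}$; this is precisely \eqref{LocalCoordLevelCurve} with $c=|\nabla u_0|^{-1}$.

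With the coordinates fixed I would transport the two summands of $L$. The operator identity following \eqref{eq:L_form} can be rewritten as $L=-\nabla\cdot\sigma_0\Pi_\perp\nabla-(1-p)\nabla\cdot\sigma_0\Pi_0\nabla$. In the new coordinates $\nabla u_0=c^{-2}\partial_{y^n}$, so $\Pi_0\nabla v=(T_0v)\,\partial_{y^n}$ with $T_0=c^{-2}\partial_{y^n}$, whereas $\Pi_\perp\nabla v=g^{\alpha\beta}(\partial_{y^\beta}v)\,\partial_{y^\alpha}$ is purely tangential. Writing the Euclidean divergence in curvilinear coordinates as $\nabla\cdot W=(\det g)^{-1/2}\partial_i((\det g)^{1/2}W^i)$, with the full volume density $(\det g)^{1/2}=c\,(\det g_{\alpha\beta})^{1/2}$ (this is the meaning of $\sqrt{\det g}$ in \eqref{local_representation_L_not_explicit}, $g_{\alpha\beta}$ being the induced metric of \eqref{LocalCoordLevelCurve}), the normal summand contributes only a $y^n$-operator, equal to $-(1-p)(\det g)^{-1/2}\partial_{y^n}(c^{-2}\sigma_0(\det g)^{1/2}\partial_{y^n})$, which is the second term of \eqref{local_representation_L_not_explicit}. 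The tangential summand contributes $Q:=-(\det g)^{-1/2}\partial_{y^\alpha}(\sigma_0 g^{\alpha\beta}(\det g)^{1/2}\partial_{y^\beta})$.

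It then remains to verify the advertised properties of $Q$: no $\partial_{y^n}$ derivative falls on $v$, so $Q$ is a family of operators in $y'$ with coefficients depending smoothly on $y^n$; it is elliptic and positive since $\sigma_0>0$ and $g^{\alpha\beta}$ is positive definite (its quadratic form is $\int \sigma_0 g^{\alpha\beta}(\partial_{y^\alpha}\phi)(\partial_{y^\beta}\phi)(\det g)^{1/2}\,dy'\geq 0$); and it is, up to sign, the tangential part of $\Delta_{\sigma_0}$, i.e. the restriction of $\Delta_{\sigma_0}$ to the level surfaces. I expect the genuinely geometric step — constructing the coordinate system and checking that the gradient-flow frame is orthogonal, $g_{\alpha n}=0$ and $g_{nn}=c^2$ — to be the main point; everything afterwards is a direct change of variables. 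The remaining work is bookkeeping: keeping the full volume density $(\det g)^{1/2}=c(\det g_{\alpha\beta})^{1/2}$ consistent (this is what yields the factor $c^{-2}$ in front of $\sigma_0$) and confirming that the tangential divergence produces no $\partial_{y^n}$ term acting on $v$, so that the splitting $L=Q-(1-p)(\det g)^{-1/2}\partial_{y^n}(c^{-2}\sigma_0(\det g)^{1/2}\partial_{y^n})$ of \eqref{local_representation_L_not_explicit} is exact.
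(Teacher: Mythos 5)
Your proof is correct and takes essentially the same route as the paper: your coordinates are identical to the paper's (the flow of $X=\nabla u_0/|\nabla u_0|^2$ is precisely the unit-speed normal flow for the metric $c^{-2}\d x^2$ that the paper obtains from the eikonal equation), and you use the same tangential/normal splitting of $L$ into $\Pi_\perp$ and $\Pi_0$ parts. The only difference is bookkeeping: you read off \eqref{local_representation_L_not_explicit} from the curvilinear divergence formula, whereas the paper integrates the quadratic form \eqref{eq:L_form} by parts in the $y$-coordinates; both give the same operator, with $\sqrt{\det g}$ the full volume density $c\,(\det g_{\alpha\beta})^{1/2}$, as you correctly note.
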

\begin{proof}
Notice first that $u_0$ trivially solves the eikonal equation $c^2|\nabla \phi|^2=1$ for the speed $c= |\nabla u_0|^{-1}$. Near some point $x_0$, we can assume that $u(x_0)=a$; then $u_0(x)$ is the signed distance from $x$ to the level surface $u_0=a$. Choose local coordinates $y'$ on this level curve, and set $y^n=u_0(x)$. Then $y=(y',y^n)$ are boundary local coordinates to $u_0=a$ and in those coordinates, the metric $c^{-2}\d x^2$ takes the form 
\[
g_{ij}\d x^i \d x^j =    (\d y^n)^2 +c^{-2}  g_{\alpha\beta}\d y^\alpha \d y^\beta, 
\quad g_{\alpha\beta} : = \sum_{i=1}^n \frac{\partial x^i}{\partial y^\alpha}  \frac{\partial x^i}{\partial y^\beta} .
\]
Then
\[
\d x^2 = c^2 (\d y^n)^2 + g_{\alpha\beta}\d y^\alpha \d y^\beta.
\]
Let $\phi \in  C^\infty_0(\Omega)$, using (\ref{eq:L_form}), we get that near $x_0$
\[
\Pi_0\nabla_x = c^{-1}\left(0,\dots,\partial/\partial y^n\right).
\]
Locally near $x_0$ we get,
\begin{equation}
\label{local_representation_L_form}
\begin{split}
(L  v,  \phi ) &= \int \sigma_0 \left( \sum_{i=1}^n \frac{\partial v}{\partial  x^i}    \frac{\partial \bar\phi}{\partial x^i} - p  \frac{\partial v}{\partial y^n} \frac{\partial \bar\phi}{\partial y^n} \right)\d x\\
&= \int\sigma_0 \left(  g^{\alpha\beta}   \frac{\partial v}{\partial y^\alpha}    \frac{\partial \bar\phi}{\partial y^\beta}   +(1-p)  c^{-2} \frac{\partial v}{\partial y^n} \frac{\partial \bar\phi}{\partial y^n}  \right)|\det(\d x/\d y  )|\,\d y.
\end{split}
\end{equation}
Hence
\begin{equation} \nonumber 
L = -\frac{1}{\sqrt{\det g}} \left( \frac{\partial}{\partial y^\beta} \sigma_0  g^{\alpha\beta}  \sqrt{\det g} \frac{\partial}{\partial y^\alpha} + (1-p)\frac{\partial}{\partial y^n}c^{-2}\sigma_0\sqrt{\det g} \frac{\partial}{\partial y^n} \right),
\end{equation}
which proves (\ref{local_representation_L_not_explicit}).
\end{proof}

\begin{remark}
In the two dimensional case we can get an explicit local coordinate system by taking $y^2 = u_0(x)$ and $y^1 = \tilde{u}_0$, with $\tilde{u}_0 \in H^1(\Omega)$ be any the $\sigma_0$-harmonic conjugate of $u_0$, that is $\nabla \tilde{u}_0 = ( \sigma \nabla u_0)^\perp$, where $(a,b)^\perp = (b,-a)$. The level curves of $v_0$ (stream lines) are perpendicular to the level curves of $u_0$ (equipotential lines), see \cite{Astala-Iwaneic-Martin-2009} for details. 
\end{remark}
\begin{remark}
Notice that if $p<1$, $L$ is elliptic (and positive); if $p>1$, $L$ is hyperbolic; and when $p=1$, the operator $L=Q(y^n)$  can be considered as an one parameter family of elliptic operators on the level surfaces of $u_0$.
\end{remark}

\section{Stability estimates}

We first provide a conditional stability estimate for the linearized problem of recovering $\sigma$ from $\sigma|\nabla u|^p$ in \eqref{eq:EllipticBoundProb} for $p>0$. We address this question by using decomposition (\ref{LinearDecomp}). 

The proof of Theorem \ref{theor:cond_stability_linearization} is divided in some lemmas about the stability of the different operator in the decomposition (\ref{LinearDecomp}), we start with the differential operator $L$ 
\begin{lemma} \label{lemma:stability_for_L}
Let $u_0$ be $\sigma_0-$harmonic, with $\nabla u_0\neq 0$ in $\overline{\Omega}$, then
\begin{itemize}
\item Case $0<p<1$: There exist $C>0$ depending on $\sigma$, $n$, $\Omega$ and $u_0$ such that
\begin{equation} \label{ineq:stab-for-L-0<p<1}
\|v\|_{H^2(\Omega)} \leq C \|Lv\|, \quad \mbox{ for } v\in H_0^1(\Omega)\cap H^2(\Omega). 
\end{equation}
\item Case $p=1$: there exist $C>0$ such that
\begin{equation} \nonumber
\|v\|^2_{L^2(\Omega)} \leq C (Lv,v), \quad \mbox{ for } \quad v\in C^\infty(\bar\Omega) \mbox{ with }  v|_{\partial \Omega} =0.
\end{equation}
\end{itemize}
\end{lemma}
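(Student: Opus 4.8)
The plan is to treat the two cases separately, taking the weak form \eqref{eq:L_form} as the starting point in both.

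For the case $0<p<1$ I would first record that $L$ is uniformly elliptic. From \eqref{eq:L_form} its principal symbol at a covector $\xi$ is $\sigma_0\big(|\Pi_\perp\xi|^2+(1-p)|\Pi_0\xi|^2\big)$, which is bounded below by $(1-p)\sigma_{\min}|\xi|^2$, since $\Pi_0,\Pi_\perp$ are complementary orthogonal projections and $\sigma_0\ge\sigma_{\min}>0$ on $\overline\Omega$. Setting $\phi=v$ in \eqref{eq:L_form} and using $|\nabla v|^2=|\Pi_\perp\nabla v|^2+|\Pi_0\nabla v|^2$ gives the coercivity bound $(Lv,v)\ge(1-p)\sigma_{\min}\|\nabla v\|^2$; combined with the Poincar\'e inequality on $H^1_0(\Omega)$ this yields $c\|v\|_{H^1}^2\le (Lv,v)\le\|Lv\|\,\|v\|_{H^1}$, hence $\|v\|_{L^2}\le\|v\|_{H^1}\le C\|Lv\|$. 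I would then invoke the standard global $H^2$ elliptic regularity estimate $\|v\|_{H^2}\le C(\|Lv\|+\|v\|_{L^2})$, valid because $\Omega$ is smooth, the Dirichlet condition holds, and the coefficients of $L$ are of class $C^1$ (built from $\sigma_0\in C^2$ and $\nabla u_0\in C^1$ with $\nabla u_0\neq0$). Combining the last two estimates gives \eqref{ineq:stab-for-L-0<p<1}.

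For the case $p=1$ the $\Pi_0$ term in \eqref{eq:L_form} drops out, so $(Lv,v)=\int_\Omega\sigma_0|\Pi_\perp\nabla v|^2\,\d x$ controls only the component of $\nabla v$ tangent to the level surfaces of $u_0$; equivalently, along each surface $S_c:=\{u_0=c\}\cap\Omega$ the quantity $|\Pi_\perp\nabla v|$ is the intrinsic Euclidean surface gradient of the restriction $v|_{S_c}$. The idea is to integrate a leafwise Poincar\'e inequality. Since $\nabla u_0\neq0$ on $\overline\Omega$ there are no critical points, and the maximum principle applied to the $\sigma_0$-harmonic $u_0$ rules out closed level sets inside $\Omega$ (a component of $S_c$ bounding $U\subset\subset\Omega$ would force $u_0\equiv c$ on $U$); hence the $S_c$ form a smooth foliation whose leaves all meet $\partial\Omega$, and because $v|_{\partial\Omega}=0$ the restriction $v|_{S_c}$ vanishes on $\partial S_c\subset\partial\Omega$. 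I would apply the coarea formula to write both $\|v\|_{L^2(\Omega)}^2$ and $(Lv,v)$ as integrals over $c$ of surface integrals against $\d S/|\nabla u_0|$, apply the Dirichlet Poincar\'e inequality on each leaf, and integrate back in $c$, absorbing the bounded factors $\sigma_0$ and $|\nabla u_0|^{\pm1}$ into the constant.

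The main obstacle is the uniformity of the Poincar\'e constant over the family $\{S_c\}$. The leaves vary smoothly with $c$ over a compact parameter interval, and near the extreme values of $c$, where a leaf shrinks toward a boundary point, the Poincar\'e constant, which is comparable to the square of the diameter of the leaf, stays bounded (indeed tends to $0$ as the leaf shrinks); its supremum over $c$ is therefore attained in the interior and is finite. I would make this precise either by a smooth-dependence and compactness argument for the foliation, or by passing to the local coordinates $(y',y^n)$ of the preceding proposition, in which $L=Q(y^n)$ acts as a smoothly varying family of elliptic operators in $y'$ and the leafwise Poincar\'e constant depends continuously on $y^n$. Everything else, namely the coarea identities and the two-sided bounds on $\sigma_0$ and $|\nabla u_0|$, is routine.
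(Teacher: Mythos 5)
Your treatment of the case $0<p<1$ is correct and in fact somewhat more self-contained than the paper's: the paper invokes Theorem 8.12 of \cite{GT-2001} together with injectivity of $L$ (which then needs the standard compactness/contradiction step to drop the $\|v\|_{L^2}$ term from the a priori estimate), whereas your coercivity bound $(Lv,v)\ge (1-p)\min_{\overline{\Omega}}\sigma_0\,\|\nabla v\|^2$, combined with Poincar\'e, gives $\|v\|_{L^2}\le C\|Lv\|$ directly, after which the same global $H^2$ estimate finishes. For $p=1$ your strategy --- leafwise Dirichlet Poincar\'e on the level surfaces of $u_0$, integrated over the foliation (your coarea formulation is exactly the paper's integration in the coordinates \eqref{LocalCoordLevelCurve}) --- is also the paper's strategy, and your maximum-principle argument excluding closed leaves is a point the paper uses silently without proof.

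However, at the step you yourself single out as the main obstacle, the uniformity of the leafwise Poincar\'e constant, there is a genuine gap, and it sits exactly where the paper introduces its key device. Your justification rests on the claims that ``the leaves vary smoothly with $c$ over a compact parameter interval'' and that the only degeneration occurs at extreme values of $c$, where a leaf shrinks to a boundary point. Neither is correct in general: the paper warns explicitly that a level surface ``may have points on $\partial\Omega$, where it is not transversal to $\partial\Omega$'', and at such interior values of $c$ the leaves $S_c=\{u_0=c\}\cap\Omega$ change topology (split, merge, gain or lose boundary pieces), so they do not depend continuously on $c$ as manifolds with boundary; a continuity-plus-compactness argument in $c$ therefore does not go through as stated (note also that $S_c$ may be disconnected, so $c$ is not even the right parameter for the leaf space, and that for a curved leaf the Dirichlet Poincar\'e constant is governed by intrinsic rather than extrinsic diameter). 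The paper's resolution is to extend $u_0$ to $u_1\in C^2(\overline{\Omega}_1)$ on a larger domain $\Omega_1\Supset\overline{\Omega}$ with $\nabla u_1\neq0$, and to extend $v$ by zero on $\Omega_1\setminus\Omega$. The Poincar\'e inequality is then applied not on the interior leaves $S_c$ but on the extended leaf pieces $\tilde{\Gamma}=\Gamma\cap\tilde{\Omega}$, $\Omega\Subset\tilde{\Omega}\Subset\Omega_1$, whose edges lie on $\partial\tilde{\Omega}$, away from $\overline{\Omega}$, where the extended $v$ vanishes identically --- no matter how $\Gamma$ meets $\partial\Omega$. Uniformity then comes from covering $\overline{\Omega}$ by finitely many disjoint foliation slabs $\tilde{\Gamma}_i\times(a_i-\epsilon,a_i+\epsilon)$ and adding the estimates \eqref{local_estimate_L}. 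Your interior-leaf inequality would follow from this by monotonicity of the Dirichlet Poincar\'e constant under the zero-extension $H_0^1(S_c\cap\Omega)\hookrightarrow H_0^1(\tilde{\Gamma})$; but without the extension of $u_0$ and of $v$ beyond $\overline{\Omega}$, which appears nowhere in your proposal, the uniform leafwise inequality is unproven and the $p=1$ argument does not close.
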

\begin{proof}
The proof for the elliptic case $0<p<1$ is an immediate consequence of elliptic theory (see for instance Theorem 8.12 in \cite{GT-2001}) and injectivity of $L$ with Dirichlet boundary conditions. The latter follows from   integration by parts, see \eqref{eq:L_form}. We get that $Lv=0$ with $v=0$ on $\partial \Omega$ implies 
\[
\Pi_\perp \nabla v= \Pi_0\nabla v=0\quad \Longrightarrow\quad \nabla v=0. 
\]
Then $v=0$. 

We now consider the case $p=1$. There exists an open bounded $\Omega_1 $ containing $\overline{\Omega}$  and  a $C^2$ extension of $u_0$ to $\overline{\Omega}_1$ denoted by $u_1$ such that  $\nabla u_1 \neq 0$ on $\overline{\Omega}_1$. We extend $v$ as zero in $\overline{\Omega}_1 \setminus  \Omega$. Let $x_0 \in \overline{\Omega}$, and denote by $\Gamma_0$ the level surface  of $u_1$ in $\overline{\Omega}_1$ containing $x_0$. Clearly $\Gamma_0$ is bounded and closed in $\overline{\Omega}_1$, hence a compact subset of $\R^n$. Its restriction to the interior is an open surface (locally given by $u_0=\text{const.}$ with $\nabla u_0\not=0$). Note that any such level surface may have points on $\partial\Omega$, where it is not transversal to $\partial\Omega$. 

Let $y = (y',y^n)$ be local boundary normal coordinates for $x_0 \in \Gamma_0$ as in (\ref{LocalCoordLevelCurve}). By compactness we can define these coordinates to 
an open neighborhood of $\Gamma_0\cap\overline{\Omega}$ contained in $\Omega_1$. In these  coordinates we can write this open neighborhood as $\tilde{\Gamma}_0\times (a_0 - \epsilon_0, a_0 + \epsilon_0)$, for $\tilde{\Gamma}_0 = \Gamma_0 \cap \tilde{\Omega}$, where $ \Omega \Subset \tilde{\Omega} \Subset \Omega_1 $; $a_0 = u_0(x_0)$; and $\epsilon_0 < \min \{ \mbox{dist}(\partial \Omega, \partial \tilde{\Omega}) , \mbox{dist}(\partial \tilde{\Omega}, \partial \Omega_1) \}$. Using representation (\ref{local_representation_L_form}), ellipticity of \eqref{eq:EllipticBoundProb}, and Poincar\' e  inequality on $\tilde{\Gamma}_0$, we see that for each $x_0\in \overline{\Omega}$ there exist $\epsilon_0$ such that for all $0< \epsilon <\epsilon_0$
\begin{equation}\label{local_estimate_L}
\begin{split}
\int\limits_{a_0-\epsilon}^{a_0+\epsilon} \int\limits_{\tilde{\Gamma}_0} Lv \overline{v}  |\det(\d x/\d y  )|\, \d y'\d y^n 
& = \int\limits_{a_0-\epsilon}^{a_0+\epsilon} \int\limits_{\tilde{\Gamma}_0} \sigma_0   g^{\alpha\beta} \frac{\partial v}{\partial y^\alpha} \frac{\partial \bar v}{\partial y^\beta}|\det(\d x/\d y  )| \,\d y'\d y^n\\ 
&\ge \frac{1}{C}\int\limits_{a_0-\epsilon}^{a_0+\epsilon} \int\limits_{\tilde{\Gamma}_0} |\nabla_{y'} v(y',y^n)|^2\d y'\,\d y^n\\
&\geq \frac{1}{C}  \int\limits_{a_0-\epsilon}^{a_0+\epsilon} \int\limits_{\tilde{\Gamma}_0} |v(y',y^n)|^2\d y'\,\d y^n \geq \frac{1}{C} \|v\|_{L^2(\tilde{\Gamma}_0 \times (a_0 -\epsilon, a_0 +\epsilon))}.
\end{split}
\end{equation}

By compactness of $\overline{\Omega}$ we can find finitely many neighborhoods of level curves of $u_0$, such that (\ref{local_estimate_L}) holds in each of them and their union contains $\overline{\Omega}$, since (\ref{local_estimate_L}) holds for all $0<\epsilon < \epsilon_0$ we can take them to be disjoint. Adding all this estimates we prove the lemma in the $p=1$ case as well. 
\end{proof}

\begin{lemma} \label{lemma:estimate_T0}
Let $u_0$ be $\sigma_0-$harmonic, with $\nabla u_0\neq 0$ in $\overline{\Omega}$, then there exist $C>0$ depending on 
$u_0$ and $\Omega$ such that
\begin{equation}\label{ineq:lemma_T0_stability}
\|h\| \leq C \| \nabla u_0 \cdot \nabla h\| \quad \mbox{ for } \quad h|_{\partial\Omega} =0,
\end{equation}
where $\nu(x)$ denotes the outer-normal vector to the boundary. 
\end{lemma}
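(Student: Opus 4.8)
The plan is to read this as a weighted Poincaré (Friedrichs-type) inequality for the first-order transport operator $T_0=\nabla u_0\cdot\nabla$, exploiting only the fact that the vector field $b:=\nabla u_0$ does not vanish; the $\sigma_0$-harmonicity of $u_0$ is not actually needed. Since $u_0\in C^2(\overline{\Omega})$ and $\overline{\Omega}$ is compact, there is $\delta>0$ with $|\nabla u_0|\ge\delta$ on $\overline{\Omega}$, while $\Delta u_0$ and $u_0$ are bounded.

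The single real device is a positive weight $\phi$ chosen so that the weighted field $\phi b$ has divergence of one fixed sign. I would take $\phi=e^{\lambda u_0}$ with $\lambda>0$ to be fixed, and compute
\[
\nabla\cdot(\phi\,\nabla u_0)=e^{\lambda u_0}\big(\lambda|\nabla u_0|^2+\Delta u_0\big)\ge e^{\lambda u_0}\big(\lambda\delta^2-\|\Delta u_0\|_{L^\infty(\Omega)}\big),
\]
so for $\lambda$ large enough there is $c_0>0$ with $\nabla\cdot(\phi\,\nabla u_0)\ge c_0$ on $\overline{\Omega}$. This choice of weight, together with the verification of the sign of the divergence, is essentially the whole content; everything else is integration by parts.

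Next I would test $T_0$ against $\phi h$. For $h\in H^1_0(\Omega)$ (argue first for smooth $h$ vanishing on $\bo$, then pass to the limit by density), integration by parts gives
\[
\int_\Omega (\nabla u_0\cdot\nabla h)\,\phi h\,\d x=\tfrac12\int_\Omega \phi\,\nabla u_0\cdot\nabla(h^2)\,\d x=-\tfrac12\int_\Omega h^2\,\nabla\cdot(\phi\,\nabla u_0)\,\d x,
\]
the boundary term $\tfrac12\int_{\bo}\phi h^2(\nabla u_0\cdot\nu)\,\d S$ vanishing because $h|_{\bo}=0$. Combining with the lower bound on the divergence,
\[
\frac{c_0}{2}\|h\|^2\le \tfrac12\int_\Omega h^2\,\nabla\cdot(\phi\,\nabla u_0)\,\d x=\Big|\int_\Omega (\nabla u_0\cdot\nabla h)\,\phi h\,\d x\Big|\le \|\phi\|_{L^\infty(\Omega)}\,\|\nabla u_0\cdot\nabla h\|\,\|h\|,
\]
and dividing by $\|h\|$ yields $\|h\|\le C\,\|\nabla u_0\cdot\nabla h\|$ with $C=2\|\phi\|_{L^\infty(\Omega)}/c_0$ depending only on $u_0$ and $\Omega$ through $\delta$, $\|\Delta u_0\|_{L^\infty}$, and $\max_{\overline\Omega}u_0-\min_{\overline\Omega}u_0$.

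I do not expect a serious obstacle: the estimate is morally a one-dimensional Poincaré inequality run along the integral curves of $\nabla u_0$, which—precisely because $\nabla u_0\neq0$—ascend monotonically in $u_0$ and so connect $\bo$ to every interior point, and the weight $e^{\lambda u_0}$ is exactly the exponential that converts this into a clean global multiplier identity. The only thing to watch is the regularity needed to integrate by parts and to annihilate the boundary term, handled by the density reduction to smooth $h$. An alternative would be to change into the level-set/flow-box coordinates $(y',y^n)$ of the previous proposition and integrate in $y^n$ from the boundary, but that forces one to track the Jacobian $|\det(\d x/\d y)|$ globally and is messier than the weighted integration by parts above.
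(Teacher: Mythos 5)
Your proof is correct, and it takes a genuinely different route from the paper. The paper argues locally along the flow of $\nabla u_0$: it extends $u_0$ to a slightly larger domain $\Omega_1$ keeping $\nabla u_0\neq 0$, extends $h$ by zero, passes to the flow-box coordinates $(y',y^n)$ of its Proposition 2.2 in a tubular neighborhood of each integral curve, writes $h(y',y^n)=\int_a^{y^n}(\nabla u_0\cdot\nabla h)(y',t)\,\d t$ (a one-dimensional Poincar\'e inequality along the curve, starting from the region where $h\equiv 0$), applies Cauchy--Schwarz, and then covers $\overline{\Omega}$ by finitely many such tubes via compactness and a partition of unity. Your argument replaces all of this with a single global multiplier identity: the Friedrichs-type weight $\phi=e^{\lambda u_0}$, whose divergence identity $\nabla\cdot(\phi\nabla u_0)=e^{\lambda u_0}(\lambda|\nabla u_0|^2+\Delta u_0)\ge c_0>0$ for large $\lambda$ converts monotonicity of $u_0$ along its own gradient flow into a sign condition, after which one integration by parts against $\phi h$ and Cauchy--Schwarz finish the proof. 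What your approach buys: it avoids the extension of $u_0$ to $\Omega_1$ (which the paper asserts without proof), avoids coordinates, Jacobians, and the covering/partition-of-unity step, gives an explicit constant $C=2\|\phi\|_{L^\infty}/c_0$, handles $h\in H^1_0(\Omega)$ directly by density, and makes visible that $\sigma_0$-harmonicity of $u_0$ is irrelevant here (only $u_0\in C^2(\overline\Omega)$ and $\nabla u_0\neq0$ are used). What the paper's approach buys: the flow-box and tubular-neighborhood setup is shared infrastructure, reused in its Lemma 3.1 for the $p=1$ stability of $L$, and the curve-by-curve argument is more robust in situations where the transporting field is not an exact gradient (your weight exploits that $u_0$ itself is a function increasing along the flow, which is special to $b=\nabla u_0$). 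Both proofs establish the stated estimate; yours is the more elementary and self-contained of the two.
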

\begin{proof}

There exist an open bounded $\Omega_1 $ containing $\overline{\Omega}$  and  a $C^2$ extension of $u_0$ to $\overline{\Omega}_1$ denoted by $u_1$ such that  $\nabla u_1 \neq 0$ on $\overline{\Omega}_1$. We extend $h$ as zero in $\overline{\Omega}_1 \setminus\Omega$. This extension commutes with the differential because $h=0$ on $\partial\Omega$. 
Let $x_0 \in \overline{\Omega}$, denote by $\Gamma_0$  the level surface of $u_1$ in $\overline{\Omega_1}$ containing $x_0$. We work in $y = (y', y^n)$, local boundary normal coordinates for $x_0= (y'_0,y^n_0)$ as in (\ref{LocalCoordLevelCurve}). Notice that since $\nabla u_1 \neq 0$ in $\overline{\Omega_1}$, these coordinates can be extended through the integral curves of the gradient field of $u_0$. 

\begin{figure}[h!] 
  \centering
  \includegraphics[scale=0.3]
{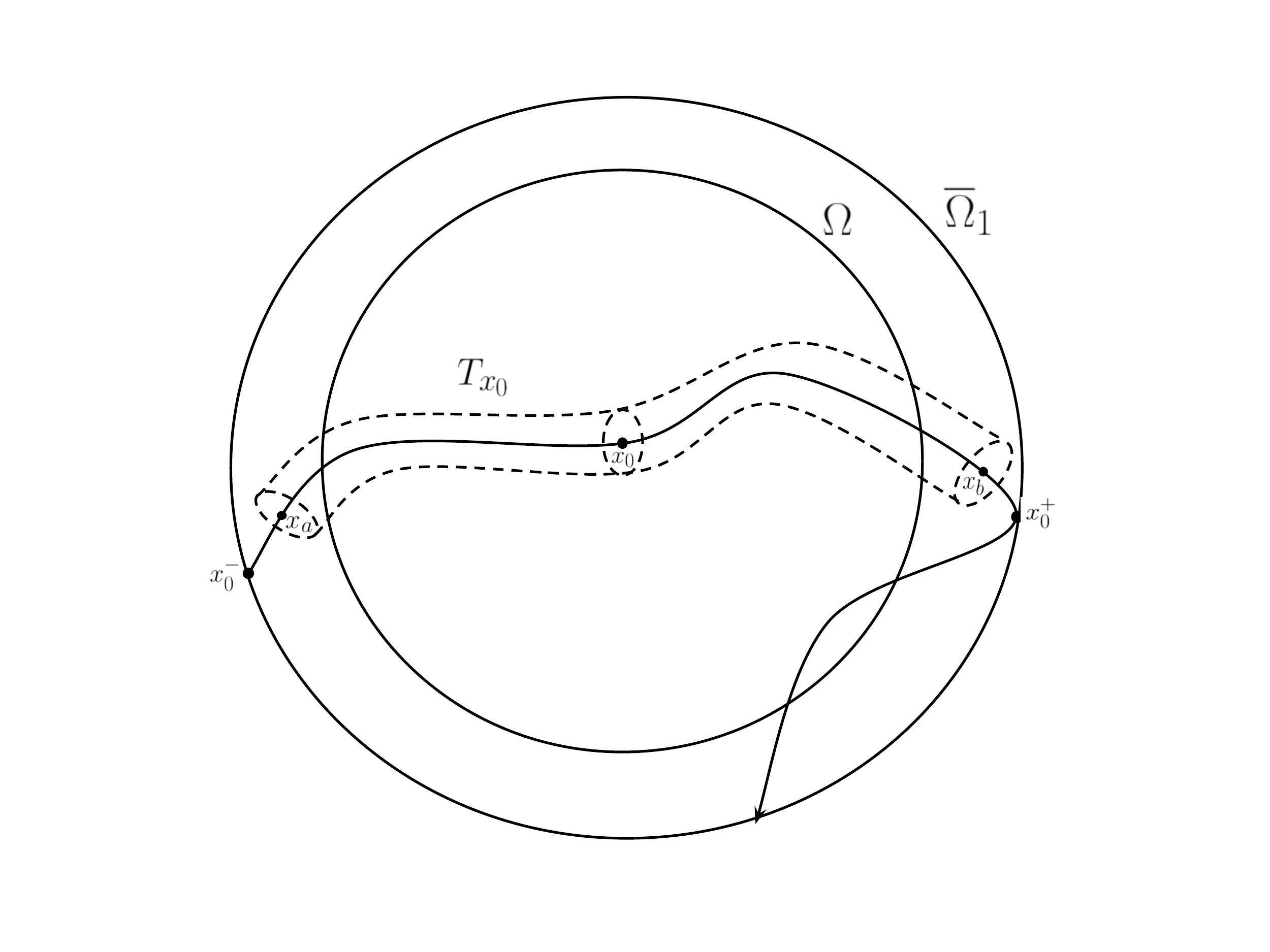}
  \caption{Tubular neighborhood $T_{x_0}$ of integral curve of $\nabla u_0$ from $x_a = x(a)$ to $x_b = x(b)$. }
  \label{fig:Integral_flow}
\end{figure}

Let $x(t): I \to \overline{\Omega}_1$ be a parametrization of the integral curve of $\nabla u_1$ such that $x(0) = x_0$, $\dot{x}(t) = \nabla u_0(x(t))$, and $I$ is the entire interval of definition of the integral curve. Denote by $x_0^+$ the first point on that the integral curve, starting from $x_0$ and traveling in the same direction of the flow,  hits the boundary $\partial\Omega_1$. Similarly denote by $x_0^-$ first point on that the integral curve, starting from $x_0$ and traveling in the opposite direction of the flow hits the boundary $\partial\Omega_1$. We know that $x_0^\pm$ exist because since 
\[
\frac{d}{dt}u(x(t)) = \nabla u_0(x(t))\cdot \dot{x}(t) = \| \nabla u_0(x(t))\|^2> 1/C>0,
\]
then $u(x(t))$ is strictly increasing along the integral curve $x(t)$; and $u$ cannot grow indefinitely in $\overline{\Omega}_1$.  This implies that the integral curve in $\overline{\Omega}_1$ cannot intersect themselves, and   cannot be infinite.

Consider a tubular neighborhood of the integral curve $x(t)$ as $ x_0^-< a \leq t \leq b<x_0^+$, 
\[
T_{x_0} = \{(y',y^n) \in \Omega_1: |y' - y'_0| < \delta_0,\, a \leq t \leq b \},
\]
where $\delta_0 >0$ is  small enough so that $T_{x_0} \cap \{y^n = a \}$ and $T_{x_0} \cap \{y^n = b \}$ are contained in $\Omega_1\setminus\overline{\Omega}$ as shown in Figure \ref{fig:Integral_flow}.  Since $h=0$ in $\Omega_1 \setminus \Omega$, we can write 
\[
h(y', y^n) = \int_{a}^{y^n} (\nabla u_0 \cdot \nabla h)(y',t) \d t \quad \mbox{ for } \quad (y',y^n) \in T_{x_0}. 
\]
Using the Cauchy inequality we get that for $\delta_0 \geq \delta >0$,
\begin{equation} \nonumber 
\begin{split}
\| h(y)\|^2_{L^2(T_{x_0})} & = \int_{|y' - y'_0| < \delta}\int_{a}^{b}  \left| \int_a^{y^n} (\nabla u_0 \cdot \nabla h)(y',t) \d t\right|^2 \d y^n \,\d y'\\
&\leq \int_{|y' - y'_0| < \delta}\int_{a}^{b}   \int_a^{y^n} |(\nabla u_0 \cdot \nabla h)(y',t)|^2 \d t\, \d y^n,\d y'\\
& \leq (b-a) \|\nabla u_0 \cdot \nabla h\|_{L^2(T_{x_0})} \leq 
C\| \nabla u_0 \cdot \nabla h\|.
\end{split}
\end{equation}
We used here the $L^2(T_{x_0})$ norm in the $y$ variables (without the Jacobian coming from the change of the variables) but that norm is equivalent to the original one. 
By the compactness of $\overline{\Omega}$, we can find $T_{x_0}, T_{x_1}, \ldots, T_{x_m}$ such that their union covers $\overline{\Omega}$ and use a partition of unity subordinated to this covering to prove \eqref{ineq:lemma_T0_stability}. 
\end{proof}

We now present the proof for the theorem of conditional stability for the linearized
problem.

\begin{proof}[Proof of Theorem \ref{theor:cond_stability_linearization}]
We first consider the case $p=1$. Let $h \in C^2(\overline{\Omega})$ and denote $\rho = (\sigma - \sigma_0)/\sigma_0 = h/\sigma_0$.  By Lemma \ref{lemma:estimate_T0}, definition of $v_0$, and interpolation estimate in section 4.3.1 in \cite{MR1328645} we have
\begin{equation} \label{ineq:linearization_estimate1}
\begin{split}
\|\rho \| 
&\leq C \|\nabla u_0 \cdot \nabla \rho \|  
\leq C  \|v_0\|_{H^2(\Omega)}  
\leq C  \|v_0\|^{\alpha_1} \cdot \|v_0\|^{1-\alpha_1}_{H^s(\Omega)}.
\end{split}
\end{equation}
Using Proposition~\ref{Prop:LinearDecomp} and Lemma \ref{lemma:stability_for_L}, we also obtain
\begin{equation} \label{ineq:linearization_estimate2}
\begin{split}
\|v_0 \| 
&\leq C \|Lv_0 \| \leq C  \left|\left|\nabla u_0 \cdot \nabla \left(\frac{dF_{\sigma_0}(\rho)}{\sigma_0 \|\nabla u_0 \|  } \right) \right|\right|   \leq C  \|dF_{\sigma_0}(\rho)\|_{H^1(\Omega)}.
\end{split}
\end{equation}
Finally, combining inequalities (\ref{ineq:linearization_estimate1}) and (\ref{ineq:linearization_estimate2}) we proof the theorem in the case $p=1$. For the case $0<p<1$, we use the same reasoning an the better estimate \eqref{ineq:stab-for-L-0<p<1} in Lemma \ref{lemma:stability_for_L} to conclude. 
\end{proof}

We now present the proof of our main result as a consequence of Theorem \ref{stability_appendix} and Theorem \ref{theor:cond_stability_linearization}
\begin{proof}[Proof of Theorem \ref{theor:cond_stability_main_theorem}]
Let $0<\theta <1$, $1>  \beta > \max \{ \theta, 1/2 \} $ and $\alpha_1$ as in \eqref{cond_stability_linearization_p=1}. We apply Theorem \ref{stability_appendix} taking
\[
\begin{array}{cccc}
\B'''_1 = H^{s}(\Omega), & \B''_1 = H^{s_1}(\Omega), & \B_1 = C^2(\Omega), & \B' = L^2(\Omega), 
\end{array}
\]
\[
\B''_2 =  \B'_2 =  \B_2 = H^{1}(\Omega), 
\]
with
\begin{equation}\label{intepolation_values}
(1-\mu_1)s_1  > \frac{n}{2} +2, \quad (1-\mu_2)s_2 =1, \quad (1-\mu_3)s = s_1 , \quad \mbox{for} \quad \mu_1, \mu_2 \in (0,1).
\end{equation}
We choose $0<\mu = \alpha_1 \mu_1\mu_2 < \min\{1/2,\beta\}$ by taking $\mu_1 = \alpha_1$ small enough, we then take $\mu_3$ as 
\[
 1 > \mu_3 = \frac{\beta - \mu}{\beta(1-\mu)} > \frac{1-2\mu}{1-\mu} >0,
\] 
under the penalty of making $s$ large enough.

First notice that as a consequence of \eqref{2-order_linerization} and \eqref{second_order_residue} the differential of $F$ and $\sigma_0$, $\d_{\sigma_0}F$, is a linearization with quadratic remainder as in \eqref{alpha_order_linearization}. Second, conditional stability for the linearizion is consequence of Theorem \ref{theor:cond_stability_linearization}, 
with $\alpha_1 = 1$ in the case $0<p<1$ and $0<\alpha_1<1$ for $p=1$. Notice that $s_1 = \frac{n + 4}{2(1-\alpha_1)} > \frac{2}{1-\alpha_1}$. Third, interpolation estimates follow by \eqref{intepolation_values}. Finally, continuity of $\d F_{\sigma_0}: C^2(\overline{\Omega}) \to H^1(\Omega)$ follows by \eqref{dF} and \eqref{ineq:vt_estimate}. Hence by Theorem \ref{stability_appendix}, for any $L>0$ there exist $\epsilon>0$ and $C>0$, so that for any $\sigma$ with 
\[
\|\sigma - \sigma_0\|_{C^2(\overline{\Omega})} < \epsilon, \qquad  \|\sigma \|_{H^s(\overline{\Omega})} \leq L,
\]
one has
\[
\|\sigma - \sigma_0 \|_{C^2(\overline{\Omega})} \leq C \| F(\sigma) - F(\sigma_0)\|_{H^1(\Omega)}^{\beta} < C \| F(\sigma) - F(\sigma_0)\|_{H^1(\Omega)}^{\theta}.
\]
which proofs \eqref{ineq:main_theorem_non-linear_stability}.
\end{proof}

\appendix

\section{Stability of non-linear inverse problems by linearization} \label{stability_appendix}

The following conditional stability Theorem through linearization is a generalization of Theorem 2 in \cite{Stefanov20092842}. 

\begin{theorem} \label{non-linear_stability_theorem}
Let $F: \B_1 \to \B_2$ be a continuous non-linear map between two Banach spaces.  Assume the there exist Banach spaces $\B_1''' \subset \B_1'' \subset \B_1 \subset \B_1'$ and $\B_{2}'' \subset \B_{2}' \subset \B_2$ that satisfy the following:
\begin{enumerate}
\item \textbf{$\alpha$-order linearization}: for $\sigma_0 \in \B_1$ there exist $\d F_{\sigma_0}:\B_1 \to \B_2$ linear map and $\alpha >1$ such that
\begin{equation}\label{alpha_order_linearization}
F(\sigma) = F(\sigma_0) +  \d F_{\sigma_0}(\sigma - \sigma_0) + R_{\sigma_0}(\sigma - \sigma_0),
\end{equation}
with $\|R_{\sigma_0},(\sigma - \sigma_0)\|_{\B_2} \leq C_{\sigma_0}\|\sigma - \sigma_0\|^{\alpha}_{\B_1}$, for $\sigma$ in some $\B_1$-neighborhood of $\sigma_0$. We say that $\d F_{\sigma_0}$ is the differential of $F$ at $\sigma_0$ with remainder of order $\alpha$.
\item \textbf{conditional stability of linearization}: there exist $C>0$ such that
\begin{equation}\nonumber 
\|h\|_{\B_1'} \leq C \|\d F_{\sigma_0} h\|^{\alpha_1}_{\B_2'}\|h\|^{1- \alpha_1}_{\B_1''} \quad \mbox{ for } \quad \alpha_1 \in (0,1].
\end{equation}
\item \textbf{interpolation estimates}: there exist $C>0$ such that 
\begin{equation} \nonumber 
\|g\|_{\B_2'} \leq C \|g \|^{\mu_2}_{\B_2}\|g \|^{1-\mu_2}_{\B_2''}, \quad \|h\|_{\B_1} \leq C \|h \|^{\mu_1}_{\B_1'}\|h \|^{1-\mu_1}_{\B_1''}, \quad \|h\|_{\B_1''} \leq C \|h \|^{\mu_3}_{\B_1}\|h \|^{1-\mu_3}_{\B_1'''}
\end{equation}
for  $ \mu_1, \mu_2 \in (0,1]$ and $1 \geq \mu_3 \geq \max\{0,(1- \alpha\mu)/(1- \mu)\}$ where $\mu = \alpha_1 \mu_1 \mu_2$.
\item \textbf{continuity of $\d F_{\sigma_0}$}: the differential $\d F_{\sigma_0}$ is continuous from $\B_1''$ to $\B_2''$.
\end{enumerate}
Then we have local conditional stability. For any $L>0$ there exist $\epsilon>0$ and $C>0$, so that for any $\sigma$ with 
\begin{equation} \nonumber 
\|\sigma - \sigma_0\|_{\B_1} < \epsilon, \qquad  \|\sigma \|_{\B_1'''} \leq L,
\end{equation} 
one has
\begin{equation}  \label{weak_cond_stability}
\|\sigma - \sigma_0 \|_{\B_1} \leq C \| F(\sigma) - F(\sigma_0)\|_{\B_2}^{\beta}.
\end{equation}
where $\beta = \mu / (1-\mu_3(1-\mu))$. In particular one has Lipschitz stability (i.e., $\beta = 1$) when $\mu_3 =1$, this happens for example when $\B'''_{1} = \B'_{1}$.
\end{theorem}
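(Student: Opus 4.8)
The plan is to collapse the four structural hypotheses into a single scalar inequality for the two quantities $X := \|\sigma - \sigma_0\|_{\B_1}$ and $D := \|F(\sigma) - F(\sigma_0)\|_{\B_2}$, and then to finish by an absorption argument that exploits the smallness of $X$. Throughout write $h = \sigma - \sigma_0$ and record the a priori bound $\|h\|_{\B_1'''} \le L' := L + \|\sigma_0\|_{\B_1'''}$, which is used only at the very last step.

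First I would assemble the interpolation chain in a carefully chosen order. Starting from the conditional stability of the linearization, $\|h\|_{\B_1'} \le C\|\d F_{\sigma_0}h\|_{\B_2'}^{\alpha_1}\|h\|_{\B_1''}^{1-\alpha_1}$, I split the middle factor by the $\B_2'$ interpolation estimate $\|\d F_{\sigma_0}h\|_{\B_2'} \le C\|\d F_{\sigma_0}h\|_{\B_2}^{\mu_2}\|\d F_{\sigma_0}h\|_{\B_2''}^{1-\mu_2}$. The $\B_2''$ factor is controlled by continuity of $\d F_{\sigma_0}\colon \B_1''\to\B_2''$, giving $\|\d F_{\sigma_0}h\|_{\B_2''}\le C\|h\|_{\B_1''}$, while for the $\B_2$ factor I use the $\alpha$-order linearization to write $\d F_{\sigma_0}h = F(\sigma)-F(\sigma_0) - R_{\sigma_0}(h)$, whence $\|\d F_{\sigma_0}h\|_{\B_2} \le D + C_{\sigma_0}X^\alpha$. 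Inserting these into the $\B_1$ interpolation estimate $\|h\|_{\B_1}\le C\|h\|_{\B_1'}^{\mu_1}\|h\|_{\B_1''}^{1-\mu_1}$, the exponents of the recurring factor $\|h\|_{\B_1''}$ add up to exactly $1-\mu$, where $\mu = \alpha_1\mu_1\mu_2$, so that
\[
\|h\|_{\B_1} \le C\bigl(D + X^\alpha\bigr)^{\mu}\,\|h\|_{\B_1''}^{1-\mu}.
\]
The third interpolation estimate together with $\|h\|_{\B_1'''}\le L'$ then gives $\|h\|_{\B_1''}\le C\|h\|_{\B_1}^{\mu_3}(L')^{1-\mu_3}\le CX^{\mu_3}$, producing the scalar inequality
\[
X \le C\bigl(D + X^\alpha\bigr)^{\mu} X^{\mu_3(1-\mu)}.
\]

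To finish I would use subadditivity $(D+X^\alpha)^\mu \le D^\mu + X^{\alpha\mu}$, valid for $0<\mu\le 1$, to obtain $X \le CD^\mu X^{\mu_3(1-\mu)} + CX^{q}$ with $q := \alpha\mu + \mu_3(1-\mu)$. The hypothesis $\mu_3 \ge (1-\alpha\mu)/(1-\mu)$ is precisely what forces $q\ge 1$; taking $\mu_3$ strictly above this threshold (as in the application, where $\alpha=2$) gives $q>1$, so for $X<\epsilon$ with $\epsilon$ small enough that $C\epsilon^{q-1}\le \tfrac12$ the second term is absorbed into the left-hand side. This leaves $\tfrac12 X \le CD^\mu X^{\mu_3(1-\mu)}$, i.e. $X^{1-\mu_3(1-\mu)}\le 2CD^\mu$; since $\mu_3(1-\mu)\le 1-\mu<1$ the exponent $1-\mu_3(1-\mu)$ is positive, and dividing yields $X \le C D^{\beta}$ with $\beta = \mu/(1-\mu_3(1-\mu))$, which is \eqref{weak_cond_stability}. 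When $\mu_3 = 1$ one computes $\beta = 1$, the Lipschitz case.

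The main obstacle is the bookkeeping of the interpolation exponents: the three interpolation estimates must be applied in exactly the right order (first the $\B_2'$ split, then the $\B_1$ split, and only at the end the $\B_1''$ split against the fixed $\B_1'''$ bound) so that all intermediate $\|h\|_{\B_1''}$-factors collapse into the single power $1-\mu$ and the data discrepancy $D$ enters only through the power $\mu$. The second delicate point is the closing absorption: the remainder constant $C_{\sigma_0}$ and all the interpolation constants are harmless precisely because the self-improving exponent $q$ is $\ge 1$, so the nonlinear term $X^q$ can be swallowed by $X$ once $\epsilon$ is small --- this is exactly where the lower bound on $\mu_3$ and the $\B_1$-smallness assumption $\|h\|_{\B_1}<\epsilon$ are both indispensable.
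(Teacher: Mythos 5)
Your proposal is correct and follows essentially the same route as the paper's own proof: both chain the conditional stability estimate, the two $\B_2$/$\B_1$ interpolation estimates, the continuity of $\d F_{\sigma_0}$, and the $\alpha$-order remainder bound into the single scalar inequality $X \le C(D + X^\alpha)^\mu X^{\mu_3(1-\mu)}$, then apply $(a+b)^\mu \le a^\mu + b^\mu$ and absorb the nonlinear term $X^{\alpha\mu + \mu_3(1-\mu)}$ using the smallness $\|\sigma-\sigma_0\|_{\B_1} < \epsilon$. The only (cosmetic) differences are the order in which the estimates are assembled and your explicit remark that the absorption genuinely needs $\alpha\mu + \mu_3(1-\mu) > 1$ rather than $\ge 1$ --- a borderline case the paper's proof glosses over in the same way.
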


\begin{proof}
Let $L>0$, we use the H\"older inequality $(a + b)^{\eta} \leq a^{\eta} + b^\eta$ for $a, b\geq 0 $ and $0<\eta<1$. the following inequalities follow easily from the hypothesis
\begin{equation*}
\begin{split}
\|\sigma - \sigma_0 \|_{\B_1} &\leq C\|\sigma -\sigma_0 \|^{\mu_1}_{\B_1'}\|\sigma -\sigma_0 \|^{1-\mu_1}_{\B_1''}\\
& \leq C \|\d F_{\sigma_0}(\sigma -\sigma_0)\|^{\mu_1\alpha_1}_{\B_2'} \cdot \|\sigma -\sigma_0 \|^{1 -\alpha_1\mu_1}_{\B_{1}''} \\ 
& \leq C  \|\d F_{\sigma_0}(\sigma -\sigma_0) \|_{\B_2}^{\mu}\cdot \|\d F_{\sigma_0}(\sigma -\sigma_0) \|_{\B_2''}^{\alpha_1 \mu_1(1-\mu_2)} \cdot \|\sigma -\sigma_0 \|^{1 -\alpha_1\mu_1}_{\B_{1}''} \\
& \leq C \left( \|F(\sigma) - F(\sigma_0) \|_{\B_2} + C_{\sigma_0} \| \sigma - \sigma _0\|^{\alpha}_{\B_1} \right)^{\mu}  \cdot \|\sigma -\sigma_0 \|^{1 -\mu}_{\B_{1}''} \\ 
& \leq  C \cdot L^{(1-\mu_3)(1-\mu)} \left( \|F(\sigma) - F(\sigma_0) \|_{\B_2}^{\mu} + C_{\sigma_0} \| \sigma - \sigma _0\|_{\B_1}^{\alpha\mu} \right)  \cdot \|\sigma -\sigma_0 \|^{\mu_3(1-\mu)}_{\B_{1}}. 
\end{split}
\end{equation*}
Hence we obtain
\begin{equation*}
\|\sigma - \sigma_0 \|^{1-\mu_3(1-\mu)}_{\B_1} (1 - C_{\sigma_0}\|\sigma - \sigma_0 \|^{\mu_3(\mu-1) + \alpha\mu -1}_{\B_1}) \leq C \|F(\sigma) - F(\sigma_0) \|_{\B_2}^{\mu}
\end{equation*}
by hypothesis $\mu_3(1-\mu) + \alpha\mu -1   \geq 0$ then there exist $\epsilon >0$ so that (\ref{weak_cond_stability}) holds.
\end{proof}

\bibliographystyle{amsplain}

\end{document}